\documentclass[11pt,reqno]{amsart}
\usepackage[margin=1in]{geometry}
\usepackage{amsmath,amssymb,amsthm,mathtools}
\usepackage{enumitem}
\usepackage[colorlinks=true,linkcolor=blue,citecolor=blue,urlcolor=blue]{hyperref}

\theoremstyle{plain}
\newtheorem{theorem}{Theorem}[section]

\newtheorem{lemma}[theorem]{Lemma}
\newtheorem{corollary}[theorem]{Corollary}
\theoremstyle{definition}

\newtheorem{remark}[theorem]{Remark}
\newtheorem{conjecture}[theorem]{Conjecture}

\DeclareMathOperator{\rank}{rank}
\DeclareMathOperator{\diag}{diag}
\DeclareMathOperator{\Diag}{Diag}
\DeclareMathOperator{\range}{range}
\DeclareMathOperator{\spn}{span}
\DeclareMathOperator{\Sym}{Sym}
\newcommand{\R}{\mathbb{R}}

\newcommand{\Ell}{\mathcal{E}}
\newcommand{\onevec}{\mathbf{1}}
\newcommand{\SDP}{\mathrm{sdp}}
\newcommand{\SDPproblem}{\mathrm{SDP}}
\newcommand{\rbp}{r_{\mathrm{BP}}}

\begin{document}

\title[Optimal solution ranks in the Max-cut SDP]
      {Optimal solution ranks in the Max-Cut semidefinite relaxation}

\author{Avinash Bhardwaj}
\address{Department of Industrial Engineering and Operations Research\\
Indian Institute of Technology Bombay\\Mumbai, India 400076}
\email{abhardwaj@iitb.ac.in}
\author{Vishnu Narayanan}
\address{Department of Industrial Engineering and Operations Research\\
Indian Institute of Technology Bombay\\Mumbai, India 400076}
\email{vishnu@iitb.ac.in}

\begin{abstract}
For every $n$ and $r$ with $1\le r$ and $\binom{r+1}{2}\le n$, we construct a connected unweighted graph on $n$ vertices whose Max-Cut semidefinite relaxation has a unique optimal solution of rank $r$. The main examples are the Kneser graphs $K(m,2)$, whose unique optima have rank $m-1$. A six-vertex wheel supplies rank three, and attaching pendant vertices preserves both rank and uniqueness. For dense random graphs, we also prove a polynomial lower bound on the optimal rank.
\end{abstract}

\subjclass[2020]{90C22, 90C27, 05C50, 15B48}
\keywords{maximum cut, semidefinite programming, rank of optimal solutions, elliptope,
Barvinok--Pataki bound, Kneser graphs}

\maketitle
\enlargethispage{2pt}

\section{Introduction}\label{sec:intro}

The semidefinite relaxation of Max-Cut replaces the two signs defining a cut by unit vectors. This raises a natural question: how many dimensions can a graph force an optimal representation to use? We show that connected unweighted graphs can require every dimension allowed by the general rank bound, even with a unique optimal Gram matrix.

Let $G$ be a graph on the vertex set $[n]$ with symmetric nonnegative weight matrix $A=(a_{ij})$, zero diagonal, and Laplacian $L=\Diag(A\onevec)-A$. We denote the Goemans--Williamson relaxation \cite{GW95} by $\SDPproblem(G)$ and its optimal value by $\SDP(G)$:

\[
\begin{aligned}
  \SDP(G)&=\max\Bigl\{\tfrac14\langle L,X\rangle:X\in\Ell_n\Bigr\},\\
  \Ell_n&=\{X\in\R^{n\times n}:X\succeq0,\ \diag X=\onevec\},
\end{aligned}
\]
where $\Ell_n$ is the elliptope. A point of $\Ell_n$ is the Gram matrix of a family of unit vectors,
and its rank is the dimension of the space they span. Rank one is the integral case: $X\in\Ell_n$
has rank one exactly when $X=xx^\top$ with $x\in\{\pm1\}^n$, a cut. Let $F(G)$ be the set of optimal
solutions of $\SDPproblem(G)$, which is a face of $\Ell_n$, and define the \emph{optimal rank}
\[
  \rho(G)=\min\{\rank X:X\in F(G)\}.
\]
Thus $\rho(G)$ measures the dimension needed by the best optimal representation, and $\rho(G)=1$ exactly when the relaxation is exact. Bhardwaj~\cite{Bha26} shows that recognising exactness is NP-hard even for unweighted graphs. Optimal ranks have been studied from several angles \cite{HLW21,MW24}, and low-rank formulations also underlie heuristics for Max-Cut \cite{BMZ02}. Here we ask how large $\rho(G)$ can be as a function of the number of vertices.

An upper bound on $\rho(G)$ comes from general theory. $F(G)$ is a nonempty compact face of $\Ell_n$,
so it has an extreme point, which is an extreme point of $\Ell_n$; and by the theorem of Barvinok
\cite{Bar95} and Pataki \cite{Pat98}, an extreme point of a spectrahedron defined by $n$ linear
equations has rank $r$ with $\binom{r+1}2\le n$. Thus an optimal solution of $\SDPproblem(G)$ attains a rank satisfying this bound, and hence
\begin{equation}\label{eq:bp}
  \rho(G)\le\rbp(n):=\max\Bigl\{r:\tbinom{r+1}2\le n\Bigr\}
  =\Bigl\lfloor\tfrac{\sqrt{8n+1}-1}2\Bigr\rfloor .
\end{equation}
The question we answer is whether equality in \eqref{eq:bp} is attained for every $n$, even by connected unweighted graphs.

The distinction between arbitrary objectives and graph objectives is essential here. Extreme points of $\Ell_n$ exist at every rank permitted by \eqref{eq:bp} \cite{GPW90,LT94}. Since every face of a spectrahedron is exposed \cite{RG95}, each is the unique optimum of a suitable linear functional. The same sharp rank bound appears in the extreme Gram dimension of Eisenberg-Nagy, Laurent and Varvitsiotis \cite{ELV14}, where the objective coefficients on a fixed graph may be arbitrary real numbers. For an unweighted graph, the off-diagonal entries of its Laplacian belong to $\{0,-1\}$. Since diagonal terms contribute a constant on $\Ell_n$, it is this restriction on the off-diagonal coefficients that distinguishes our objectives. Our result realizes every admissible rank under this restriction, while also requiring connectivity and uniqueness.

The main construction uses Kneser graphs, familiar from graph coloring and Lov\'asz's work~\cite{Lov78}. The graph $K(m,2)$ has the two-element subsets of $[m]$ as vertices, with two vertices adjacent exactly when the corresponding subsets are disjoint.

\begin{theorem}\label{thm:kneser}
Let $m\ge5$ and $n=\binom m2$. Then the relaxation $\SDPproblem(K(m,2))$ has a unique optimal
solution $X^\ast$, given by
\[
  X^\ast_{ef}=\frac{m\,|e\cap f|-4}{2(m-2)}\qquad(e,f\in\tbinom{[m]}2),
\]
and $\rank X^\ast=m-1=\rbp(n)$. In particular $\rho(K(m,2))=\rbp(n)$.
\end{theorem}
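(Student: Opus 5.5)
The plan is to exhibit $X^\ast$ explicitly, certify its optimality with a constant diagonal dual solution, and then use complementary slackness to confine every optimum to a small space on which the diagonal constraints force uniqueness. Throughout, let $N\in\R^{n\times m}$ be the vertex--element incidence matrix, with $N_{e,i}=1$ iff $i\in e$. Then $M:=NN^\top$ has entries $M_{ef}=|e\cap f|$, and $N^\top N=(m-2)I+J_m$. Write $V_1=\{Na:a\in\R^m,\ \onevec^\top a=0\}$, which has dimension $m-1$. Recall the standard spectral facts for $K(m,2)$: it is $d$-regular with $d=\binom{m-2}2$, and its adjacency eigenvalues are $d$ on $\spn\{\onevec\}$, $-(m-3)$ on $V_1$, and $1$ on the remaining space $V_2$.

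\emph{Step 1: feasibility and rank.} Write $X^\ast=\frac{m}{2(m-2)}M-\frac{2}{m-2}J_n$. Its diagonal entries are $\frac{2m-4}{2(m-2)}=1$. Next, $N\onevec_m=2\onevec_n$ and $M\onevec_n=2(m-1)\onevec_n$, from which a direct computation gives $X^\ast\onevec_n=0$. For $a\perp\onevec_m$ we have $MNa=N(N^\top N)a=(m-2)Na$, so $X^\ast Na=\frac m2 Na$; here we use $J_nNa=0$, since $\onevec_n^\top Na=2\onevec_m^\top a=0$. Finally $X^\ast$ vanishes on $(\range N)^\perp$, because both $M$ and $J_n$ do. Hence $X^\ast\succeq0$, $\range X^\ast=V_1$, and $\rank X^\ast=m-1$. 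The identity $\binom m2=n<\binom{m+1}2$ gives $\rbp(n)=m-1$.

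\emph{Step 2: optimality.} The Laplacian $L=dI-A$ has eigenvalues $0$, $d+m-3$ (on $V_1$) and $d-1$ (on $V_2$), so its largest eigenvalue is $\lambda=d+m-3$, attained exactly on $V_1$. Take the dual $\min\{\onevec^\top y:\Diag(y)\succeq\frac14L\}$ with the constant choice $y=\frac\lambda4\onevec$; it is feasible with value $n\lambda/4$. Complementary slackness holds because $\range X^\ast=V_1\subseteq\ker(\lambda I-L)$, so $X^\ast$ is optimal and $\SDP(K(m,2))=n\lambda/4$. As a check, the primal value is $\frac14\bigl(nd-\langle A,X^\ast\rangle\bigr)=\frac{nd}4\bigl(1+\frac2{m-2}\bigr)=\frac{n m(m-3)}8$, which equals $\frac{n\lambda}{4}$.

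\emph{Step 3: uniqueness, which I expect to be the crux.} Any optimal $X$ satisfies $\langle \frac\lambda4 I-\frac14L,X\rangle=0$ with both matrices PSD, so $\range X\subseteq V_1$. Hence $X=NBN^\top$ for a symmetric $B$ acting on $\onevec_m^\perp$, which we may normalize so that $B\onevec_m=0$. The difference $D$ of two optima is again of this form, $D=NBN^\top$, and satisfies $\diag D=0$, that is, $B_{ii}+B_{jj}+2B_{ij}=0$ for all $i\ne j$. Substituting $B_{ij}=-(B_{ii}+B_{jj})/2$ into the row-sum condition gives $\frac{4-m}2B_{ii}=\frac12\operatorname{tr}B$, so all $B_{ii}$ equal $\operatorname{tr}B/(4-m)$. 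Summing over $i$ then gives $\operatorname{tr}B\,(4-2m)=0$. Since $m\ge5$, we get $\operatorname{tr}B=0$, hence $B=0$ and $D=0$. The only delicate points are the hypothesis $m\ge5$, which excludes the degenerate value $m=4$ in the last computation, and the claim that $V_1$ is exactly the top eigenspace of $L$, which holds because $m-3>-1$. Thus $F(K(m,2))=\{X^\ast\}$ and $\rho(K(m,2))=m-1=\rbp(n)$.
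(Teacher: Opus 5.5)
Your proposal is correct and is essentially the paper's proof. The constant dual solution (the eigenvalue bound of Lemma~\ref{lem:eig}) confines every optimum to the top Laplacian eigenspace $V_1=E$. Your computation $(m-4)B_{ii}+\operatorname{tr}B=0$, $(2m-4)\operatorname{tr}B=0$ on the difference of two optima is exactly the paper's spanning argument from Lemma~\ref{lem:span}.

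There is one harmless slip: $\onevec_n^\top N=(m-1)\onevec_m^\top$ (these are column sums), not $2\onevec_m^\top$, but $J_nNa=0$ still follows from $\onevec_m^\top a=0$.
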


Explicitly, $X^\ast$ is the Gram matrix obtained by assigning to the pair $\{a,b\}$ the unit vector proportional to the centred indicator vector $e_a+e_b-\frac2m\onevec$, where $e_a$ is the $a$-th standard basis vector. Disjoint pairs have inner product $-\frac2{m-2}$ and distinct overlapping pairs have inner product $\frac{m-4}{2(m-2)}$. For $m=5$ this is the Petersen graph, whose unique optimal solution has rank $4=\rbp(10)$.

The Kneser family skips rank three, because $K(4,2)$ is a perfect matching. The wheel formed by joining a hub to all five vertices of a cycle supplies the missing rank, already on six vertices (Theorem~\ref{thm:rank3}). To reach any larger order, we attach pendant vertices: each new leaf receives the negative of its neighbour's vector, preserving both the rank and the uniqueness of the optimum. These observations give a complete realization result.

\begin{theorem}\label{thm:every}
For every $n\ge1$ and every integer $1\le r\le\rbp(n)$, there is a connected unweighted graph on $n$ vertices whose Max-Cut semidefinite relaxation has a unique optimal solution of rank $r$.
\end{theorem}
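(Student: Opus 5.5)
The proof will assemble three ingredients: a base graph of each rank, a pendant-vertex lemma, and a count showing the base graphs are small enough. For rank $r=1$, any connected bipartite graph on $n$ vertices works, for instance the path $P_n$. Since a bipartite graph has $\SDP(G)=|E|$, the cut matrix $xx^\top$ is optimal. It is the unique optimum because every edge forces $X_{ij}=-1$, and connectivity then determines every entry. For $r=2$, I would take the $3$-cycle: its unique optimum places three unit vectors at $120^\circ$, giving rank $2$. A direct argument is that $\tfrac14\langle L,X\rangle\le \tfrac14(3\cdot 2+3)$ with equality iff $X\onevec=0$, which forces $X_{ij}=-\tfrac12$. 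For $r=3$, Theorem~\ref{thm:rank3} supplies the six-vertex wheel. For $r\ge4$, Theorem~\ref{thm:kneser} supplies $K(r+1,2)$ on $\binom{r+1}{2}$ vertices with a unique optimum of rank $r$. In every case the base graph $H_r$ is connected, its unique optimum has rank $r$, and it has $n_r\le\binom{r+1}{2}$ vertices ($n_1=1$, $n_2=3$, $n_3=6$, $n_r=\binom{r+1}2$).

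Next I would prove a pendant lemma. Let $H$ be connected with a unique optimum $X$, and form $H'$ by adding a leaf $\ell$ adjacent to a vertex $v$. Then $H'$ is connected and has a unique optimum $X'$, obtained by giving $\ell$ the vector $-u_v$, so $\rank X'=\rank X$. To see this, write any feasible $Y$ for $H'$ as a Gram matrix of unit vectors $(w_i)$. Its objective equals the $H$-objective of the restricted Gram matrix plus $\tfrac12(1-\langle w_v,w_\ell\rangle)$. The first term is at most $\SDP(H)$ and the second is at most $1$. Both bounds are attained by $X'$, so $\SDP(H')=\SDP(H)+1$.

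For uniqueness, any optimum must attain both bounds. The restriction to $V(H)$ must then be the unique optimum $X$, and $\langle w_v,w_\ell\rangle=-1$ forces $w_\ell=-w_v$. Hence every inner product $\langle w_\ell,w_i\rangle=-\langle w_v,w_i\rangle$ is determined, so $Y=X'$. The rank is unchanged because the new vector lies in the span of the old ones.

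Given $n$ and $1\le r\le\rbp(n)$, we have $n_r\le\binom{r+1}2\le n$. Starting from $H_r$, I would attach $n-n_r$ pendant vertices one at a time, say all to a single vertex. Applying the lemma inductively gives a connected unweighted graph on $n$ vertices whose unique optimum has rank $r$.

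The main obstacle is the uniqueness step in the pendant lemma. The key point is to argue at the level of vectors, or equivalently through the equality $Y_{\ell i}=-Y_{vi}$ derived from $Y_{v\ell}=-1$ and positive semidefiniteness of the $2\times2$-augmented principal minors. This makes every entry involving the new leaf determined, rather than only the entry $Y_{v\ell}$. The remaining steps are bookkeeping on top of Theorems~\ref{thm:kneser} and~\ref{thm:rank3}.
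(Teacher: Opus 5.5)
Your proposal is correct and takes essentially the same route as the paper. It uses the same base graphs ($K_1$ or a path for rank one, $K_3$, the six-vertex wheel, and $K(r+1,2)$ for $r\ge4$), the same pendant-leaf lemma forcing $w_\ell=-w_v$, and the same count $\binom{r+1}{2}\le n$. The one step you assert without proof is that $K(r+1,2)$ is connected, which the paper settles in a line: disjoint pairs are adjacent, and two distinct intersecting pairs cover only three points, so for $r+1\ge5$ some pair disjoint from both is a common neighbour.
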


\begin{corollary}\label{cor:extremal}
For every $n\ge1$, the maximum of $\rho(G)$ over connected unweighted graphs on $n$ vertices equals $\rbp(n)$. The same maximum is obtained over all unweighted graphs, or over graphs with nonnegative real edge weights.
\end{corollary}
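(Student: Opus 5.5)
The corollary follows directly from Theorem~\ref{thm:every} and the upper bound \eqref{eq:bp}; no new construction is needed.

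\emph{Upper bound.} The bound \eqref{eq:bp} holds for every symmetric nonnegative weight matrix $A$. Its proof uses only that $F(G)$ is a nonempty compact face of $\Ell_n$, that such a face has an extreme point which is also extreme in $\Ell_n$, and the Barvinok--Pataki rank bound. So $\rho(G)\le\rbp(n)$ for every graph on $n$ vertices with nonnegative real edge weights, connected or not.

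\emph{Lower bound.} Apply Theorem~\ref{thm:every} with $r=\rbp(n)$, which is admissible since $1\le\rbp(n)$ for every $n\ge1$. This gives a connected unweighted graph $G$ on $n$ vertices whose relaxation has a unique optimal solution $X^\ast$ with $\rank X^\ast=\rbp(n)$. Uniqueness means $F(G)=\{X^\ast\}$, so the minimum in the definition of $\rho(G)$ is taken over a single point, and $\rho(G)=\rbp(n)$.

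\emph{Comparing the three classes.} We have
\[
\{\text{connected unweighted}\}\subseteq\{\text{unweighted}\}\subseteq\{\text{nonnegatively weighted}\}.
\]
The maximum of $\rho$ over the smallest class is at least $\rbp(n)$ by the lower bound. The maximum over the largest class is at most $\rbp(n)$ by the upper bound. Hence all three maxima equal $\rbp(n)$, and each is attained.

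The only point needing care is the role of uniqueness. Without it, a rank-$\rbp(n)$ optimum would not give $\rho(G)=\rbp(n)$, because a lower-rank optimum might also exist. There is no real obstacle beyond this; all the substantive work lies in Theorem~\ref{thm:every}.
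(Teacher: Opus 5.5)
Your proposal is correct and follows the paper's proof: the bound \eqref{eq:bp} applies to every nonnegatively weighted graph, and Theorem~\ref{thm:every} with $r=\rbp(n)$ attains it in the smallest class, the connected unweighted graphs. You also state explicitly that uniqueness of the optimum is what turns a rank-$\rbp(n)$ optimum into $\rho(G)=\rbp(n)$, a step the paper leaves implicit.
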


Thus every rank permitted by the general theory occurs as the rank of a unique optimum, even when the graph is required to be connected and all its edge weights are one.

The constructions also suggest a question about random graphs: must the dimension grow when the edges are chosen independently? For $G\sim G(n,\frac12)$, where each pair is an edge independently with probability $1/2$, we prove
\[
 \rho(G)\ge c\left(\frac{n}{(\log n)^3}\right)^{1/7}
\]
with probability tending to one, for an absolute constant $c>0$ (Theorem~\ref{thm:random}). Thus, with probability tending to one, every optimal representation of an Erd\H{o}s--R\'enyi graph with edge probability $1/2$ requires polynomially many dimensions. The proof compares the best objective value at bounded rank with a feasible matrix concentrated near the upper spectral edge. We also formulate a conjecture on the growth rate, motivated by the Gaussian spin-glass literature.

The proofs of the realization results follow a common pattern: first locate the range of every optimum, then use the diagonal equations to determine the Gram matrix. For the Kneser graphs, the classical Laplacian eigenvalue bound \cite{MP90,DP93mp} confines this range to the top Laplacian eigenspace. Van Dam and Sotirov \cite{DS16} show that the spectral bound is attained for all walk-regular graphs, a class containing the Kneser graphs. Attaining the value alone does not determine the minimum optimal rank. We prove that the diagonal equations are independent on the relevant space of symmetric matrices, so the entire optimal face consists of one point. This is an application of the classical extremality criterion for correlation matrices \cite{GPW90,LT94}.

The centred-incidence representation is also central to the work of Godsil, Roberson, Rooney, \v{S}\'amal and Varvitsiotis \cite{GRRSV17}, who prove universal completability and unique vector colorability of Kneser graphs. Those problems prescribe or bound individual edge correlations; Max-Cut optimizes their sum. The difference is substantive: $K_4$ has a unique optimal vector coloring, represented by a regular tetrahedron, while its Max-Cut relaxation has both this rank-three optimum and rank-one optima coming from balanced cuts. For $K(m,2)$, we establish uniqueness using the diagonal equations alone within the least adjacency eigenspace.

The wheel uses the same principle with the kernel of a dual slack matrix in place of the top Laplacian eigenspace. Finally, attaching a leaf preserves the optimal rank and uniqueness. This extension is a special case of the vertex-sum constructions in \cite{HLW21}. We collect the necessary facts in Section~\ref{sec:prelim}, prove the Kneser and wheel results in Sections~\ref{sec:kneser} and~\ref{sec:rank3}, and assemble the construction proving Theorem~\ref{thm:every} in Section~\ref{sec:proofs}. The final section gives a polynomial lower bound for random graphs and proposes a conjecture on the growth of their optimal rank.

\section{The optimal face}\label{sec:prelim}

We begin with the geometry shared by the two constructions. The dual of the relaxation is $\min\{\sum_iy_i:\Diag(y)-\frac14L\succeq0\}$. The identity matrix is strictly feasible for the primal problem, so strong duality holds and the dual optimum is attained. If $S$ is an optimal dual slack, complementary slackness confines every optimum $X$ to $\range X\subseteq\ker S$. A scalar diagonal choice gives the familiar eigenvalue bound.

\begin{lemma}[Eigenvalue bound \cite{MP90,DP93mp}]\label{lem:eig}
Let $\lambda=\lambda_{\max}(L)$ and let $E$ be its eigenspace. For every
$X\in\Ell_n$ we have $\langle L,X\rangle\le n\lambda$, with equality if and only if
$\range X\subseteq E$. If some $X\in\Ell_n$ satisfies $\range X\subseteq E$, then
$\SDP(G)=\frac n4\lambda$ and
\[
  F(G)=\{X\in\Ell_n:\range X\subseteq E\}.
\]
\end{lemma}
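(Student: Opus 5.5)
The plan is to use the spectral decomposition of $L$ and the trace identity $\operatorname{tr}X=n$ on $\Ell_n$. Since $\lambda=\lambda_{\max}(L)$, the matrix $\lambda I-L$ is positive semidefinite with kernel exactly $E$. For any $X\in\Ell_n$ we have $\langle \lambda I - L, X\rangle\ge0$, because the inner product of two positive semidefinite matrices is nonnegative. Using $\langle I,X\rangle=\operatorname{tr}X=\sum_i X_{ii}=n$, this gives $\langle L,X\rangle\le n\lambda$.

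For the equality case, I will use the standard fact that for $P,X\succeq0$ one has $\langle P,X\rangle=0$ if and only if $PX=0$. To see it, write $X=VV^\top$; then $\langle P,X\rangle=\operatorname{tr}(V^\top PV)$, which is a sum of the nonnegative quantities $v_j^\top P v_j$ over the columns $v_j$ of $V$. It vanishes exactly when $P^{1/2}v_j=0$ for all $j$, that is, when $Pv_j=0$. Since the columns of $V$ span $\range X$, the condition is $\range X\subseteq\ker P$. With $P=\lambda I-L$ we have $\ker P=E$, so equality holds if and only if $\range X\subseteq E$. This step is routine, and I do not expect a real obstacle here.

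For the final assertion, suppose some $X_0\in\Ell_n$ has $\range X_0\subseteq E$. Then $\tfrac14\langle L,X_0\rangle=\tfrac n4\lambda$ by the equality case. Also every feasible $X$ satisfies $\tfrac14\langle L,X\rangle\le \tfrac n4\lambda$, so $\SDP(G)=\tfrac n4\lambda$. Consequently $X\in\Ell_n$ is optimal exactly when $\langle L,X\rangle=n\lambda$, which by the equality case is exactly when $\range X\subseteq E$. This identifies $F(G)$ as stated.

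Alternatively, one can phrase the argument via duality, as the paper suggests. The choice $y=\tfrac\lambda4\onevec$ is dual feasible with slack $S=\tfrac14(\lambda I-L)$ and objective $\tfrac n4\lambda$. Complementary slackness then gives the same range condition, but the direct argument above avoids needing it.
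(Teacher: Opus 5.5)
Your proof is correct and follows the paper's argument: pair $\lambda I-L\succeq0$ with $X\succeq0$, use $\operatorname{tr}X=n$, and characterize equality by $(\lambda I-L)X=0$, i.e.\ $\range X\subseteq E$. You also prove the positive-semidefinite orthogonality fact and derive $\SDP(G)$ and $F(G)$ explicitly, both of which the paper leaves implicit.
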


\begin{proof}
$\lambda I-L\succeq0$ and $X\succeq0$ give $\langle\lambda I-L,X\rangle\ge0$, that is
$\langle L,X\rangle\le\lambda\operatorname{tr}X=n\lambda$. Equality holds if and only if
$\langle\lambda I-L,X\rangle=0$, which for two positive semidefinite matrices means
$(\lambda I-L)X=0$, that is $\range X\subseteq E$.
\end{proof}

\begin{lemma}[Parametrising the face]\label{lem:param}
Let $E\subseteq\R^n$ be a subspace of dimension $g$, let $U\in\R^{n\times g}$ have columns forming a
basis of $E$, and let $u_i^\top$ be the $i$-th row of $U$. Then the map $M\mapsto UMU^\top$ is a
rank-preserving bijection from
\[
  \mathcal M=\{M\in\Sym(\R^g):M\succeq0,\ u_i^\top Mu_i=1\ (i\in[n])\}
\]
onto $\{X\in\Ell_n:\range X\subseteq E\}$.
\end{lemma}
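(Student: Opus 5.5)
The plan is to verify directly that $\Phi(M)=UMU^\top$ is well defined into the target set, injective, surjective, and rank-preserving. Everything rests on one observation: since the columns of $U$ are linearly independent, $U$ has full column rank $g$. Hence $U^\top U$ is invertible and $U^+:=(U^\top U)^{-1}U^\top$ is a left inverse, $U^+U=I_g$.

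\emph{Well defined.} Take $M\in\mathcal M$ and put $X=UMU^\top$.
\begin{itemize}
\item $X$ is symmetric, and $v^\top Xv=(U^\top v)^\top M(U^\top v)\ge0$, so $X\succeq0$.
\item The diagonal entries are $X_{ii}=u_i^\top Mu_i=1$, so $X\in\Ell_n$.
\item Every vector $Xv=U(MU^\top v)$ lies in the column space of $U$, which is $E$. Thus $\range X\subseteq E$.
\end{itemize}

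\emph{Injective.} If $UMU^\top=UM'U^\top$, multiply on the left by $U^+$ and on the right by $(U^+)^\top$. This gives $M=M'$.

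\emph{Surjective.} Let $X\in\Ell_n$ with $\range X\subseteq E$, and set $M:=U^+X(U^+)^\top$.
\begin{itemize}
\item $M$ is symmetric and positive semidefinite, being a congruence of $X$.
\item $P:=UU^+$ is the orthogonal projector onto $E$, since it is symmetric, idempotent, and has range $E$.
\item Because $\range X\subseteq E$, we have $PX=X$. By symmetry of $X$, also $XP=X$.
\item Therefore $UMU^\top=PXP=X$.
\item The diagonal constraints then hold automatically: $u_i^\top Mu_i=(UMU^\top)_{ii}=X_{ii}=1$. So $M\in\mathcal M$ and $\Phi(M)=X$.
\end{itemize}

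\emph{Rank-preserving.} Since $U$ has trivial kernel, $\rank(UM)=\rank M$. Similarly $\rank(NU^\top)=\rank N$ for any $N$, because $U^\top$ is surjective onto $\R^g$. Combining these, $\rank(UMU^\top)=\rank M$.

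The only step that needs any care is surjectivity, where the hypothesis $\range X\subseteq E$ is what gives $PXP=X$. The rest is routine linear algebra.
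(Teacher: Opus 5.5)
Your proof is correct and follows essentially the same route as the paper: full column rank of $U$ gives uniqueness of $M$, transfers positive semidefiniteness and rank between $M$ and $UMU^\top$, and the unit-diagonal condition becomes $u_i^\top M u_i=1$. You make the left inverse $U^+$ and the projector $UU^+$ explicit, where the paper leaves them implicit.
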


\begin{proof}
A symmetric $X$ has $\range X\subseteq\range U$ if and only if $X=UMU^\top$ for a symmetric $M$,
which is unique because $U$ has full column rank; for the same reason $X\succeq0$ if and only if
$M\succeq0$, and $\rank X=\rank M$. The condition $X_{ii}=1$ reads $u_i^\top Mu_i=1$.
\end{proof}

In these coordinates, uniqueness becomes a question about linear equations: do the conditions $u_i^\top Mu_i=1$ determine $M$? The following form of the correlation-matrix extremality criterion \cite{GPW90,LT94} will be useful.

\begin{lemma}[Spanning forces a single point]\label{lem:span}
In the setting of Lemma~\ref{lem:param}, suppose that the rank-one matrices $u_iu_i^\top$,
$i\in[n]$, span $\Sym(\R^g)$, and that $\mathcal M$ contains a positive definite matrix $M_0$.
Then $\mathcal M=\{M_0\}$. If moreover $E$ is the top eigenspace of $L$, then $F(G)=\{UM_0U^\top\}$
and $\rho(G)=g$.
\end{lemma}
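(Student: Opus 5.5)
The plan is to prove that any element of $\mathcal M$ is close enough to $M_0$ that it must equal it. Take an arbitrary $M\in\mathcal M$ and set $D=M-M_0\in\Sym(\R^g)$. Subtracting the defining equations for $M$ and for $M_0$ gives
\[
  \langle u_iu_i^\top, D\rangle = u_i^\top D u_i = 0\qquad(i\in[n]).
\]
So $D$ is orthogonal, in the trace inner product on $\Sym(\R^g)$, to every matrix $u_iu_i^\top$.

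These matrices span $\Sym(\R^g)$ by hypothesis. Hence $D$ is orthogonal to all of $\Sym(\R^g)$, in particular to itself, and $\langle D,D\rangle=0$ forces $D=0$. Therefore $\mathcal M=\{M_0\}$.

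Positive definiteness of $M_0$ plays no role in the uniqueness itself; it is only used to guarantee that $\mathcal M$ is nonempty. It also shows that the unique point has full rank $g$.

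For the second assertion, suppose $E$ is the top eigenspace of $L$. Lemma~\ref{lem:param} gives that $X_0=UM_0U^\top$ lies in $\Ell_n$, satisfies $\range X_0\subseteq E$, and has rank $g$. Since such an $X_0$ exists, Lemma~\ref{lem:eig} identifies the optimal face as
\[
  F(G)=\{X\in\Ell_n:\range X\subseteq E\}.
\]
The bijection of Lemma~\ref{lem:param} carries this set onto $\mathcal M=\{M_0\}$, so $F(G)=\{UM_0U^\top\}$. The minimum rank over $F(G)$ is then $\rank X_0=\rank M_0=g$, which is $\rho(G)=g$.

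There is no real obstacle here; the one point needing care is the appeal to Lemma~\ref{lem:eig}, whose identification of $F(G)$ requires some feasible $X$ with range in $E$, and $X_0$ supplies it.
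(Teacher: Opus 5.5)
Your proof is correct and follows the paper's argument: $M-M_0$ is orthogonal to every $u_iu_i^\top$, hence to all of $\Sym(\R^g)$, so it vanishes, and the second assertion follows from Lemmas~\ref{lem:eig} and~\ref{lem:param} exactly as you combine them. Your opening sentence about showing $M$ is ``close enough'' to $M_0$ misdescribes the argument; what you actually do is purely linear and uses no closeness.
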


\begin{proof}
The equations $\langle u_iu_i^\top,M\rangle=1$ are linear in $M$, and if the coefficient matrices
span $\Sym(\R^g)$ they have at most one solution. The second statement follows from
Lemmas~\ref{lem:eig} and~\ref{lem:param}.
\end{proof}

The converse holds whenever $M_0\succ0$: a nonzero symmetric matrix orthogonal to every $u_iu_i^\top$ gives sufficiently small feasible perturbations of both signs. Thus uniqueness is equivalent to the absence of such a perturbation. In particular, $n\ge\binom{g+1}{2}$ is necessary. Our constructions achieve equality before any pendant vertices are added.

We will use two small graphs to start the construction. The graph $K_1$ has the unique feasible matrix $(1)$, of rank one. For the triangle $K_3$, the objective of a Gram matrix of unit vectors $v_1,v_2,v_3$ is
\[
  \frac14\left(9-\|v_1+v_2+v_3\|^2\right).
\]
Three unit vectors summing to zero attain the maximum. Taking inner products with each vector shows that all three off-diagonal entries of their Gram matrix are $-1/2$. Hence the optimum for $K_3$ is unique and has rank two.

The following extension lets us add vertices without changing the optimal rank.

\begin{lemma}\label{lem:pendant}
Let $G'$ be obtained from a graph $G$ by adding a new vertex $\ell$ and a single edge $v\ell$ of weight one, where $v\in V(G)$. Then
\[
  \SDP(G')=\SDP(G)+1,\qquad \rho(G')=\rho(G).
\]
If the optimal solution for $G$ is unique, then so is the optimal solution for $G'$.
\end{lemma}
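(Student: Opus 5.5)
The plan is to split the objective of $G'$ into the objective of $G$ plus the single new edge term. For $X'\in\Ell_{n+1}$, write $X$ for its principal submatrix on $V(G)$ and $t=X'_{v\ell}$. Since the new vertex $\ell$ has degree one, we have
\[
  \tfrac14\langle L',X'\rangle=\tfrac14\langle L,X\rangle+\tfrac14(2-2t)=\tfrac14\langle L,X\rangle+\tfrac{1-t}{2}.
\]
Here $X\in\Ell_n$ and $t\ge-1$, because $X'$ is positive semidefinite with unit diagonal. So the objective is at most $\SDP(G)+1$. Equality holds if and only if $X\in F(G)$ and $t=-1$.

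To see that this bound is attained, take any $X\in F(G)$ with Gram vectors $(w_i)_{i\in V(G)}$, and set $w_\ell=-w_v$. The resulting Gram matrix $X'$ lies in $\Ell_{n+1}$, restricts to $X$, and has $X'_{v\ell}=-1$. Hence $\SDP(G')=\SDP(G)+1$. Moreover, the vector family for $X'$ spans the same subspace as that for $X$, so $\rank X'=\rank X$.

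Conversely, let $X'\in F(G')$. Then $t=-1$, and in Gram-vector terms this reads $\langle w_v,w_\ell\rangle=-1$ with both vectors of unit length. Equality in Cauchy--Schwarz then forces $w_\ell=-w_v$. Thus $X'$ is completely determined by its restriction $X\in F(G)$: the row of $\ell$ is the negative of the row of $v$, and the diagonal entry at $\ell$ is $1$. Its rank equals $\rank X$, since $w_\ell$ lies in the span of the other vectors.

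This gives a rank-preserving bijection $F(G)\to F(G')$. Both consequences follow at once: $\rho(G')=\rho(G)$, and uniqueness of the optimum transfers from $G$ to $G'$. The only step needing care is the Cauchy--Schwarz equality argument, which says that $t=-1$ pins down $w_\ell$. At the matrix level, the same fact can be seen directly: the $2\times2$ block on $\{v,\ell\}$ equals $\begin{pmatrix}1&-1\\-1&1\end{pmatrix}$, so $e_v+e_\ell$ lies in $\ker X'$. This forces the rows of $v$ and $\ell$ to be negatives of each other. Everything else is bookkeeping.
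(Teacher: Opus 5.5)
Your proposal is correct and follows essentially the same route as the paper. Both arguments bound the old edges by $\SDP(G)$ and the new edge by $1$, extend an optimum by $w_\ell=-w_v$, and use equality on the new edge to force $w_\ell=-w_v$; your explicit rank-preserving bijection $F(G)\to F(G')$ and the kernel vector $e_v+e_\ell$ are just a slightly more detailed version of the same reasoning.
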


\begin{proof}
For any unit-vector representation of $G'$, the old edges contribute at most $\SDP(G)$ and the new edge contributes $\frac14\|v_\ell-v_v\|^2\le1$. Conversely, an optimal representation of $G$ extends by setting $v_\ell=-v_v$, attaining $\SDP(G)+1$ without increasing the rank. Every optimum for $G'$ must therefore restrict to an optimum for $G$, which proves the equality of optimal ranks. Moreover, equality on the new edge forces $v_\ell=-v_v$. All correlations involving the new vertex are consequently fixed by the old Gram matrix, proving the assertion about uniqueness. 
\end{proof}

\section{The Kneser graphs}\label{sec:kneser}

We now construct the optimal representation and show that it is forced by the diagonal equations. Fix $m\ge5$, let $G=K(m,2)$ and $n=\binom m2$. We write vertices as pairs $e=\{a,b\}$, and $A$ for the adjacency matrix. The graph is regular of degree $d=\binom{m-2}2$, so $L=dI-A$ and the top Laplacian eigenspace is the least adjacency eigenspace.

Let $N\in\R^{n\times m}$ be the incidence matrix of pairs versus points, $N_{e,a}=1$ if $a\in e$.
Then $N^\top N=(m-2)I+J$, where $J$ is the all-ones matrix, because a point lies in $m-1$ pairs and
two distinct points lie in exactly one. Put
\[
  E=\{Nf:f\in\R^m,\ \onevec^\top f=0\}.
\]
Since $N^\top N=(m-2)I$ on $\onevec^\perp$, the map $f\mapsto Nf$ is injective there and $\dim E=m-1$.

\begin{lemma}\label{lem:kneserspec}
The Laplacian $L$ of $K(m,2)$ has eigenvalue $0$ on $\spn(\onevec)$, eigenvalue $d+m-3$ on $E$, and
eigenvalue $d-1$ on the orthogonal complement of $\spn(\onevec)\oplus E$. In particular
$\lambda_{\max}(L)=d+m-3$ and its eigenspace is exactly $E$.
\end{lemma}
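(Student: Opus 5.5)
Since $L=dI-A$, the statement reduces to computing the spectrum of the adjacency matrix $A$ of $K(m,2)$ on the three subspaces $\spn(\onevec)$, $E$, and the orthogonal complement $W$ of $\spn(\onevec)\oplus E$. The plan is to express $A$ through the incidence matrix $N$. For pairs $e,f$ we have $(NN^\top)_{ef}=|e\cap f|$, which equals $2$ on the diagonal, $1$ for distinct overlapping pairs, and $0$ for disjoint pairs. Hence
\[
  NN^\top=2I+B,\qquad J=I+B+A,
\]
where $B$ is the adjacency matrix of the line graph $T(m)$ (pairs adjacent when they meet). Eliminating $B$ gives
\[
  A=J-NN^\top+I .
\]

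I will evaluate this expression on each subspace in turn.

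\begin{enumerate}[label=(\roman*)]
\item On $\onevec$, the graph is $d$-regular, so $A\onevec=d\onevec$ and $L\onevec=0$.
\item On $E$, take $x=Nf$ with $\onevec^\top f=0$. Then $\onevec^\top Nf=2\,\onevec^\top f=0$, because each point lies in $m-1$ pairs and so $N^\top\onevec=(m-1)\onevec$. This gives $Jx=0$. Using $N^\top N=(m-2)I+J$ and $Jf=0$,
\[
  NN^\top Nf=N\bigl((m-2)f\bigr)=(m-2)x .
\]
Therefore $Ax=(1-(m-2))x=-(m-3)x$, and $Lx=(d+m-3)x$.
\item On $W$, observe that $\spn(\onevec)\oplus E$ is exactly $\range N$, since $N\onevec=2\onevec$. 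So for $x\in W$ we have $N^\top x=0$, hence $NN^\top x=0$, and also $Jx=0$ because $\onevec\in\range N$. Then $Ax=x$ and $Lx=(d-1)x$.
\end{enumerate}

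These three subspaces together span $\R^n$, so this is the full spectral decomposition of $L$.

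It remains to compare the three eigenvalues. Since $m\ge5$ we have $m-3\ge2>-1$, so $d+m-3>d-1$. Also $d-1=\binom{m-2}2-1>0$ because $d\ge3$. Thus $\lambda_{\max}(L)=d+m-3$. The eigenspace for this eigenvalue is exactly $E$ because the three eigenvalues are distinct, which holds since $d-1\ne0$ and $d-1\ne d+m-3$. If $W$ happens to be trivial, nothing more needs checking.

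The only step needing care is the identification $\range N=\spn(\onevec)\oplus E$, and it is routine. We have $\range N=\{Nf\}$, and decomposing $f=c\onevec+f_0$ with $\onevec^\top f_0=0$ gives $Nf=2c\,\onevec+Nf_0$. The first term lies in $\spn(\onevec)$ and the second in $E$. The sum is orthogonal because $Jx=0$ for $x\in E$, as shown in step (ii).
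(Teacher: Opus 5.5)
Your proof is correct and is essentially the paper's argument: the paper's inclusion--exclusion formula on $\ker N^\top$ is the entrywise form of your identity $A=J-NN^\top+I$, which you also apply on $E$ via $N^\top N=(m-2)I+J$, where the paper instead directly counts the pairs disjoint from $\{a,b\}$. One harmless slip: $\onevec^\top Nf=(N^\top\onevec)^\top f=(m-1)\,\onevec^\top f$, not $2\,\onevec^\top f$, but it still vanishes, so nothing downstream is affected.
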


\begin{proof}
For $x=Nf$ with $\onevec^\top f=0$ we have $x_{ab}=f_a+f_b$, and
\[
\begin{aligned}
  (Ax)_{ab}&=\sum_{\{c,c'\}\cap\{a,b\}=\varnothing}(f_c+f_{c'})\\
  &=(m-3)\sum_{c\notin\{a,b\}}f_c=-(m-3)(f_a+f_b),
\end{aligned}
\]
because each $c\notin\{a,b\}$ lies in $m-3$ pairs disjoint from $\{a,b\}$. So $Ax=-(m-3)x$ on $E$.
The space $\spn(\onevec)\oplus E$ is the range of $N$, since $\onevec=N(\frac12\onevec)$; its
orthogonal complement is $\ker N^\top=\{x:\sum_{b\ne a}x_{ab}=0\text{ for all }a\}$. For such $x$
also $\sum_ex_e=\frac12\sum_a\sum_{b\ne a}x_{ab}=0$, so
\[
  (Ax)_{ab}=\sum_ex_e-\sum_{b'\ne a}x_{ab'}-\sum_{a'\ne b}x_{a'b}+x_{ab}=x_{ab},
\]
the last term correcting for the pair $\{a,b\}$, which is subtracted twice. Finally $A\onevec=d\onevec$.
Hence $L=dI-A$ has the stated eigenvalues, and $d+m-3>d-1$.
\end{proof}

\begin{proof}[Proof of Theorem~\ref{thm:kneser}]
Let $\pi=I-\frac1mJ$ be the orthogonal projector of $\R^m$ onto $\onevec^\perp$. By
Lemma~\ref{lem:kneserspec} and the identity $N^\top N=(m-2)I$ on $\onevec^\perp$, the orthogonal
projector onto $E$ is $P_E=\frac1{m-2}N\pi N^\top$, with entries
\[
  (P_E)_{ef}=\frac{1}{m-2}\bigl(\onevec_e-\tfrac2m\onevec\bigr)^{\!\top}
  \bigl(\onevec_f-\tfrac2m\onevec\bigr)=\frac{|e\cap f|-\frac4m}{m-2},
\]
where $\onevec_e=e_a+e_b$ is the indicator vector of $e=\{a,b\}$. The matrix
$X^\ast=\frac{n}{m-1}P_E=\frac m{2}P_E$ has entries $\frac{m|e\cap f|-4}{2(m-2)}$; its diagonal
entries ($|e\cap f|=2$) are $1$, it is positive semidefinite, and its range is $E$. By
Lemma~\ref{lem:eig}, $X^\ast$ is optimal and $F(G)=\{X\in\Ell_n:\range X\subseteq E\}$.

It remains to show that every optimum equals $X^\ast$. By Lemma~\ref{lem:span}, it suffices to prove that the matrices $(\pi\onevec_e)(\pi\onevec_e)^\top$ span the symmetric operators on $\onevec^\perp$. Indeed, the rows of $N$ restricted to this subspace are represented by $\pi\onevec_e$, and the positive definite scalar operator $M_0=\frac{m}{2(m-2)}I$ gives $X^\ast$.

Suppose a symmetric operator $M$ on $\onevec^\perp$ is orthogonal to all these rank-one matrices. Extend it to a symmetric matrix $\widetilde M$ on $\R^m$ by setting $\widetilde M\onevec=0$. Orthogonality then reads
\[
  \widetilde M_{aa}+2\widetilde M_{ab}+\widetilde M_{bb}=0\qquad(a\ne b).
\]

Let $\delta_a=\widetilde M_{aa}$ and $T=\sum_a\delta_a$. Summing over $b\ne a$ and using
$\sum_{b\ne a}\widetilde M_{ab}=-\delta_a$ gives $(m-1)\delta_a-2\delta_a+(T-\delta_a)=0$, that is
$(m-4)\delta_a+T=0$ for every $a$. Summing over $a$ gives $(2m-4)T=0$, so $T=0$, and then
$\delta_a=0$ for every $a$ because $m\ne4$. The displayed relation now gives $\widetilde M_{ab}=0$
for $a\ne b$, so $M=0$. By Lemma~\ref{lem:span}, $F(G)=\{X^\ast\}$ and $\rho(G)=m-1$.

Finally $8n+1=4m^2-4m+1=(2m-1)^2$, so $\rbp(n)=m-1$ by \eqref{eq:bp}.
\end{proof}

\begin{remark}\label{rem:square}
An alternative proof of the required nonsingularity follows from the entrywise square of $X^\ast$. The Kneser construction uses exactly $\binom{m}{2}=\dim\Sym(\onevec^\perp)$ vectors, so the diagonal system is square. Indeed, $(X^\ast_{ef})^2=\langle u_eu_e^\top,u_fu_f^\top\rangle$ for its unit Gram vectors $u_e$. Thus $X^\ast\circ X^\ast$ is the Gram matrix of the coefficient matrices in the diagonal equations. With $B=J-I-A$, the adjacency matrix of the triangular graph, it equals $I+\frac{(m-4)^2}{4(m-2)^2}B+\frac{4}{(m-2)^2}A$. On the three subspaces in Lemma~\ref{lem:kneserspec}, its eigenvalues are, respectively,
\[
  \frac m2,\qquad \frac{m(m-4)^2}{4(m-2)^2},\qquad \frac{m^2}{2(m-2)^2}.
\]
All are positive for $m\ge5$. At $m=4$ the middle eigenvalue vanishes: $K(4,2)$ is a perfect matching and has $\rho=1$, explaining the missing rank in the Kneser family.
\end{remark}

\begin{remark}\label{rem:value}
The optimal value is $\SDP(K(m,2))=\frac n4(d+m-3)=\frac{m^2(m-1)(m-3)}{16}$.
\end{remark}

\section{Rank three}\label{sec:rank3}

The Kneser family leaves one rank to account for. The triangle supplies rank two, but $K(4,2)$ is a perfect matching and has an integral optimum. For rank three we turn to a familiar graph: the wheel consisting of a five-cycle and a hub adjacent to every cycle vertex. It has six vertices, the smallest order allowed by the Barvinok--Pataki bound for rank three.

\begin{theorem}\label{thm:rank3}
Let $H$ be the unweighted graph obtained by joining a new vertex to every vertex of $C_5$. Then the Max-Cut semidefinite relaxation of $H$ has a unique optimal solution of rank three and optimal value $(23+4\varphi)/4$, where $\varphi=(1+\sqrt5)/2$.
\end{theorem}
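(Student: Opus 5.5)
The plan is to guess the optimum using the dihedral symmetry of $H$, certify it with a dual slack matrix, and then apply the spanning criterion of Lemma~\ref{lem:span} with $\ker S$ in place of the top eigenspace.

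\emph{Candidate primal.} Write $h$ for the hub vector and $v_0,\dots,v_4$ for the cycle vectors, with $v_k$ adjacent to $v_{k\pm1}$. I would take
\[
  v_k=-a\,h+\sqrt{1-a^2}\,\bigl(\cos\tfrac{4\pi k}{5},\,\sin\tfrac{4\pi k}{5}\bigr),
\]
where the planar part is orthogonal to $h$. This is the pentagram arrangement, tilted away from the hub. The objective $\frac12\sum_{ij\in E}(1-X_{ij})$ then becomes
\[
  f(a)=\tfrac52\Bigl[1+a+(1-a^2)\bigl(1+\tfrac{\varphi}{2}\bigr)\Bigr],
\]
using $\cos\frac{4\pi}5=-\varphi/2$. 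Maximizing gives $a=1/(2+\varphi)$. I will check that $f(a)$ simplifies to $(23+4\varphi)/4$ via $\varphi^2=\varphi+1$. The resulting Gram matrix $X^\ast$ has rank three, since $h$ together with the plane spans $\R^3$.

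\emph{Dual certificate.} Complementary slackness $SX^\ast=0$, with $S=\Diag(y)-\frac14L$, forces
\[
  y_i=\tfrac14\,(LX^\ast)_{ii}=\tfrac14\sum_{j\sim i}\bigl(1-X^\ast_{ij}\bigr).
\]
This gives $y_{\mathrm{hub}}=\alpha$ and $y_k=\beta$ for every cycle vertex. Then $\sum y_i=\alpha+5\beta$ equals the primal value automatically. It remains to prove $S\succeq0$ and $\ker S=\range X^\ast$, which has dimension three.

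$S$ commutes with the rotation of $C_5$, so I would diagonalize it by Fourier modes. The modes of frequency $\pm1$ and $\pm2$ on the cycle are each invariant and orthogonal to the hub. On them $S$ acts as $\beta-\frac14\bigl(2-2\cos\frac{2\pi j}{5}\bigr)$, up to the hub degree contribution included in $L$'s diagonal. The frequency-$2$ pair must give eigenvalue $0$; this is the planar part of $X^\ast$ and is guaranteed by $SX^\ast=0$. The frequency-$1$ pair must give a strictly positive eigenvalue. The remaining $2\times2$ block, on the span of the hub vector and the all-ones vector of the cycle, must have one zero eigenvalue (from the $h$-component of $X^\ast$) and one positive eigenvalue; this amounts to checking that its trace is positive. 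These are explicit numeric checks in $\Bbb Q(\sqrt5)$. Once they hold, optimality follows, and complementary slackness confines every optimum to $\{X\in\Ell_6:\range X\subseteq\ker S\}$.

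\emph{Uniqueness.} I would apply Lemma~\ref{lem:param} with $E=\ker S$ and $U$ the $6\times3$ factor whose rows $u_i$ are the vectors above. $M_0=I$ is feasible and positive definite. By the argument of Lemma~\ref{lem:span}, which uses only linearity and not the eigenvalue structure, it suffices to show that the six matrices $u_iu_i^\top$ span $\Sym(\R^3)$, which has dimension $6$.

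Equivalently, the $6\times6$ matrix $X^\ast\circ X^\ast$ must be nonsingular, as in Remark~\ref{rem:square}. This matrix is again hub-plus-circulant. Its cycle block has entries $1$, $p^2$ and $q^2$, where $p=X^\ast_{k,k+1}$ and $q=X^\ast_{k,k+2}$, and its hub row has entries $a^2$. I would verify nonsingularity via the Fourier eigenvalues $1+2p^2\cos\frac{2\pi j}5+2q^2\cos\frac{4\pi j}5$ for $j=1,2$, together with one $2\times2$ determinant.

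I expect the main obstacle to be these closed-form positivity and nonvanishing checks in $\Bbb Q(\sqrt5)$. They are sign-sensitive, and a wrong orientation of the pentagram (frequency $1$ instead of $2$) would break them. If the symmetric computation becomes unwieldy, I would fall back on direct exact numerical verification of $S$ and of $X^\ast\circ X^\ast$.
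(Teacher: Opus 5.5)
Your proposal is correct and is essentially the paper's proof. It uses the same tilted pentagram with $a=1/(2+\varphi)$ and the same dual slack; yours is $\tfrac14$ of the paper's $S$. It also makes the same reduction of uniqueness to the matrices $u_iu_i^\top$ spanning $\Sym(\R^3)$.

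One correction: defining $y$ from $\diag(LX^\ast)$ only forces $\diag(SX^\ast)=0$, not $SX^\ast=0$. So the frequency-$2$ eigenvalue $\tfrac{\varphi}{4}+\tfrac12\cos\tfrac{4\pi}{5}=0$ is a genuine check rather than automatic, though it does hold.

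The only real difference is the spanning check. The paper expands $u_j^\top Mu_j=1$ in $1,\cos\theta_j,\sin\theta_j,\cos2\theta_j,\sin2\theta_j$ and uses their independence at five points, which avoids numerics. Your $X^\ast\circ X^\ast$ check also succeeds. With $p^2=\tfrac{9}{20}$ and $q^2=\tfrac{3+\sqrt5}{40}$, the cycle eigenvalues at frequencies $1$ and $2$ are $\tfrac{27+7\sqrt5}{40}$ and $\tfrac{4-\sqrt5}{5}$, and the hub block determinant is positive.
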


\begin{proof}
Write $h$ for the hub, label the cycle vertices $0,\ldots,4$ in cyclic order, and put
\[
  \varphi=\frac{1+\sqrt5}{2},\quad b=\varphi+2,\quad a=b^{-1},\quad s=\sqrt{1-a^2}.
\]
Assign the hub the unit vector $u_h=(0,0,1)$ and assign the cycle vertices
\[
  u_j=(s\cos\theta_j,s\sin\theta_j,-a),\quad \theta_j=\frac{4\pi j}{5}\quad(0\le j\le4).
\]
The five cycle vectors lie on a circle below the hub vector, with consecutive cycle vertices separated by an angle of $4\pi/5$ around that circle. To see the choice of height, replace $a$ by $t\in[0,1]$ in this representation. The sum of the edge inner products is $\frac52((\varphi+2)t^2-2t-\varphi)$, minimized at $t=a$. We next certify that this representation is optimal among all feasible Gram matrices. Let $U\in\R^{6\times3}$ have the displayed rows. Its columns are independent, so $X^*=UU^\top$ is feasible and has rank three.

Let $A_C$ be the adjacency matrix of $C_5$ and consider, in the vertex order $h,0,\ldots,4$, the matrix
\[
  S=\begin{pmatrix}
    5/b&\onevec^\top\\
    \onevec&A_C+\varphi I
  \end{pmatrix}.
\]
The eigenvalues of $A_C$ are $2\cos(2\pi k/5)$, $0\le k\le4$. On the nonconstant cycle eigenspaces, $S$ therefore has eigenvalues $0$ and $\sqrt5$, each twice. On the remaining subspace, with the constant cycle vector normalized to unit length, it is represented by
\[
  \begin{pmatrix}5/b&\sqrt5\\\sqrt5&b\end{pmatrix}.
\]
This matrix has determinant zero and trace $5/b+b=5$. Thus $S\succeq0$ and $\dim\ker S=3$.

The identity $2\cos(4\pi/5)=-\varphi$ shows that $S$ annihilates the first two columns of $U$. For the third column, the hub equation is $5/b-5a=0$ and each cycle equation is $1-(2+\varphi)a=0$. Hence $SU=0$ and $\ker S=\range U$.

Let $A_H$ be the adjacency matrix of $H$. Since $S$ and $A_H$ have the same off-diagonal entries, every $X\in\Ell_6$ satisfies
\[
  \langle A_H,X\rangle=\langle S,X\rangle-5/b-5\varphi\ge-5/b-5\varphi.
\]
Equality holds for $X^*$ because $SU=0$. Minimizing $\langle A_H,X\rangle$ is equivalent to maximizing the Max-Cut objective, so $X^*$ is optimal. For any optimum $X$, equality gives $SX=0$, and therefore $\range X\subseteq\ker S=\range U$.

The graph has ten edges, so the optimal value is $5+(5/b+5\varphi)/4=(23+4\varphi)/4\approx7.368$. Its maximum cut has value $7$: if $k$ cycle vertices lie opposite the hub, at most $k+2\min\{k,5-k\}\le7$ edges cross the cut, and equality is attained by placing cycle vertices $0,2,4$ opposite the hub.

It remains to show that the diagonal equations allow no other optimum. By Lemma~\ref{lem:param}, write $X=UMU^\top$ with $M\in\Sym(\R^3)$ positive semidefinite. The hub equation gives $M_{33}=1$. The equation $u_j^\top Mu_j=1$ at each cycle vertex becomes
\[
\begin{split}
  0={}&\frac{s^2}{2}(M_{11}+M_{22}-2)\\
     &+\frac{s^2}{2}(M_{11}-M_{22})\cos(2\theta_j)
      +s^2M_{12}\sin(2\theta_j)\\
     &-2asM_{13}\cos\theta_j-2asM_{23}\sin\theta_j.
\end{split}
\]
The five functions $1,\cos\theta,\sin\theta,\cos2\theta,\sin2\theta$ are linearly independent when evaluated at the five angles $\theta_j$. Indeed, multiplying a vanishing linear combination by $e^{2i\theta}$ gives a polynomial of degree at most four with five distinct roots $e^{i\theta_j}$. All five coefficients in the displayed equation must therefore vanish. Since $a,s>0$, this gives $M_{12}=M_{13}=M_{23}=0$ and $M_{11}=M_{22}=1$. Hence $M=I$, proving uniqueness.
\end{proof}

\begin{remark}\label{rem:wheel-slack}
The wheel illustrates why it is useful to allow a general dual slack matrix. Its largest Laplacian eigenvalue is $6$ and is simple: an eigenvector has entry $-5$ at the hub and entry $1$ at every cycle vertex. No Gram matrix with unit diagonal can have its range in this eigenspace. The matrix $S$ in the proof instead uses a diagonal shift of the adjacency matrix tailored to the hub and the cycle vertices.
\end{remark}

\section{Realizing every rank}\label{sec:proofs}

Each rank now has a connected unweighted example on the smallest possible number of vertices. Attaching leaves increases the order one vertex at a time while preserving the unique optimum and its rank.

\begin{proof}[Proof of Theorem~\ref{thm:every}]
Let $1\le r\le\rbp(n)$, so $\binom{r+1}{2}\le n$. For $r=1$ start with $K_1$, and for $r=2$ start with $K_3$; as observed in Section~\ref{sec:prelim}, their optima are unique and have ranks one and two, respectively. For $r=3$ use the six-vertex wheel of Theorem~\ref{thm:rank3}. For $r\ge4$ use $K(r+1,2)$, whose unique optimum has rank $r$ by Theorem~\ref{thm:kneser}.

These starting graphs are connected. For the Kneser graph, disjoint pairs are adjacent, while two distinct intersecting pairs have a common neighbour: since $r+1\ge5$, a two-element set can be chosen outside their union. Each starting graph has $\binom{r+1}{2}$ vertices. Attach $n-\binom{r+1}{2}$ pendant vertices and apply Lemma~\ref{lem:pendant} repeatedly. The resulting graph is connected and unweighted, and its optimum is unique and still has rank $r$.
\end{proof}

\begin{proof}[Proof of Corollary~\ref{cor:extremal}]
The upper bound \eqref{eq:bp} applies to every graph under consideration. Theorem~\ref{thm:every} with $r=\rbp(n)$ attains it in the smallest of the three classes, the connected unweighted graphs.
\end{proof}

Can the bound $\rho(G)=\rbp(n)$ be attained with a unique optimum by a $2$-connected graph at every order $n\ge3$, or by a connected regular graph at every sufficiently large order?

\section{Random graphs}\label{sec:open}

The constructions above attain the largest possible optimal rank. We now ask what happens when the edges are chosen independently. The next result shows that a dense random graph forces every optimal representation to use a polynomially growing number of dimensions.

\begin{theorem}\label{thm:random}
There is an absolute constant $c>0$ such that, for $G\sim G(n,\frac12)$,
\[
 \mathbb P\!\left\{\rho(G)\ge c\left(\frac{n}{(\log n)^3}\right)^{1/7}\right\}\longrightarrow1.
\]
\end{theorem}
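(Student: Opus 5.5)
The plan is to compare two quantities: an upper bound on the objective attainable by any feasible matrix of rank at most $k$, and a lower bound on $\SDP(G)$. If the first is smaller than the second whenever $k$ is below the claimed threshold, then no optimum has rank $\le k$. Write $A$ for the adjacency matrix and center it as $W=A-\frac12(J-I)$. On $\Ell_n$ we have $\langle J,X\rangle\ge0$, so the objective $\frac14\langle L,X\rangle$ equals $\frac14\bigl(\tfrac{n(n-1)}2-\tfrac12(\langle J,X\rangle-n)\bigr)-\frac14\langle W,X\rangle$, up to a constant shift. Hence it suffices to compare the quantities $\max\langle -W,X\rangle$, where the $J$ term only helps us.

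\emph{Lower bound on $\SDP$.} By standard concentration (Füredi--Komlós, or the matrix Bernstein inequality), $\|W\|\le(1+o(1))\sqrt n$ with high probability, and the empirical spectral distribution of $-W/\sqrt n$ follows the semicircle law on $[-1,1]$ up to small error.

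Take the projector $P$ onto the top $\varepsilon n$ eigenvectors of $-W$. Then form $X=D^{-1/2}PD^{-1/2}$ after rescaling, where $D=\diag P$. By delocalization of eigenvectors, $D\approx\varepsilon I$ uniformly, with error $O(\sqrt{\varepsilon\log n/n}\,)$. Alternatively, and more robustly, use $X=\tfrac1\varepsilon P$ corrected by a diagonal term, i.e.\ the standard Montanari--Sen type rounding. This gives
\[
\langle -W,X\rangle\ge n^{3/2}(1-C\varepsilon^{2/3})-\text{error}.
\]
The value $1$ comes from the spectral edge of the semicircle: the average of its top $\varepsilon$ fraction is $1-\Theta(\varepsilon^{2/3})$.

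The $J$ term must also be controlled. We have $\onevec^\top P\onevec$ small, because $\onevec$ is nearly orthogonal to the bulk eigenvectors of $W$. This follows since $A$ has an outlier eigenvector close to $\onevec/\sqrt n$, so $\langle J,X\rangle=O(n)$ after projecting out $\onevec$.

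\emph{Upper bound at rank $k$.} For $X=VV^\top$ with $V\in\R^{n\times k}$ having unit rows, bound $\sum_{ij}(-W_{ij})\langle v_i,v_j\rangle$ uniformly over such $V$ by an $\epsilon$-net argument. The key estimate is
\[
\max_{\rank X\le k}\langle -W,X\rangle\le n^{3/2}\bigl(1-c/k\bigr)+O\bigl(n\sqrt{k\,n\log n}\,\bigr)^{?}.
\]
For the main term, $\langle -W,X\rangle\le\sum_{\ell\le k}\lambda_\ell(-W)\,\mu_\ell$, where $\mu_\ell$ are the eigenvalues of $X$ with $\sum\mu_\ell=n$. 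A rank-$k$ matrix with unit diagonal cannot concentrate on the top eigenvectors. By delocalization, the top $m$ eigenvectors of $-W$ have squared row norms about $m/n$, so any unit-diagonal rank-$k$ Gram matrix has weight at least $n-O(k\cdot m)$-ish outside the top $m$ eigenvalues.

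Choosing $m$ optimally, and using that eigenvalues beyond index $m$ are at most $\sqrt n(1-c(m/n)^{2/3})$, gives a deficit that is polynomial in $1/k$. This is achieved via a union/net bound over $k$-dimensional subspaces, which costs factors of $\log n$ in the exponent.

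\emph{Main obstacle.} The hard step is the rank-$k$ upper bound, i.e.\ turning "unit diagonal and rank $k$" into a quantitative loss versus $n^{3/2}$. I would first try a Grothendieck-type rank-$k$ comparison: a rank-$k$ vector solution loses a factor $1-\Theta(1/k)$ relative to the spherical (full-rank) optimum, which is known to be $\approx n^{3/2}(1-o(1))$. Concretely, I would project $V$ onto a random direction and compare with the edge spectral profile, controlling errors via the local semicircle law at scale $n^{-2/3}$, with loss $(\log n)^{O(1)}$.

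Balancing the resulting deficit $\sim k^{-a}$ against the lower-bound error $\varepsilon^{2/3}$ plus the concentration errors $\sim(\log n)^{3/7}$-type terms yields the exponent $1/7$. The remaining step is to verify that all error terms combine to the stated $(n/(\log n)^3)^{1/7}$.
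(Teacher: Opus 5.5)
\textbf{The gap is the rank-$k$ upper bound.} This is the heart of the theorem, and none of the routes you sketch produces the $1-\Theta(1/k)$ deficit.

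(i) Your net estimate is vacuous. A single $\langle W,X\rangle$ fluctuates on the scale $\|X\|_F\ge n/\sqrt k$. A union bound over an $e^{Cnk\log n}$-point net therefore costs at least $n^{3/2}\sqrt{\log n}$, and up to your $n\sqrt{kn\log n}$. That already exceeds the main term $n^{3/2}$, let alone the gap $n^{3/2}/k$.

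(ii) The trace-inequality step rests on a false claim. Take $k=m=1$ and $x=\operatorname{sign}(v_1)$. Delocalization itself gives $\langle x,v_1\rangle^2=\|v_1\|_1^2\ge(\|v_1\|_2^2/\|v_1\|_\infty)^2\ge n^{1-o(1)}$. So a rank-one unit-diagonal matrix can put weight $n^{1-o(1)}$, not $O(km)$, on the top eigenvector. More fundamentally, delocalization is a statement about individual eigenvectors and cannot control a supremum over all $V$. A correct lower bound on the weight outside the top $m$ eigenvectors is itself a ground-state estimate as hard as the theorem.

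(iii) Projecting $V$ onto a random direction is a rounding argument. It bounds lower-rank optima from below by higher-rank ones, which is the wrong direction. Nor is ``rank $k$ loses a factor $1-\Theta(1/k)$'' a general Grothendieck-type fact; the trace-constrained spherical optimum is already attained at rank one. It is precisely the random-instance statement to be proved. Finally, you assert the exponent $1/7$ rather than derive it.

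\textbf{The missing idea is a Gaussian comparison followed by universality.} The paper first replaces the signs by a GOE matrix $B$. It groups unit-row matrices $U\in\R^{n\times r}$ into cells according to $Q_U=U^\top U/n$, a parameter set of entropy only $O(r^2\log n)$. On a cell centred at $Q$, the increments of the quadratic process $\langle B,UU^\top\rangle$ are dominated, up to $O(n\delta^2)$, by those of the linear process $2\sum_ig_i^\top Q^{1/2}u_i$. Sudakov--Fernique then bounds each cell maximum by $2n\,\mathbb E\sqrt{g^\top Qg}\le 2n\,\mathbb E\|g\|/\sqrt r=n\bigl(2-\tfrac1{2r}+O(r^{-2})\bigr)$, in the normalization where the spectral edge is $2$. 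Only the cell entropy is paid in a union bound.

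A Lindeberg swap on a softmax at inverse temperature $\beta$ then transfers this to the graph. It uses that Gaussians and signs share moments through order three, and costs an error $(r\log n)^{3/4}n^{-1/4}$. Balancing that error against the $1/r$ gap is exactly what yields $r\asymp(n/(\log n)^3)^{1/7}$.

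Your lower-bound half is a reasonable variant of the paper's. You use a sharp projector onto the top $\varepsilon n$ eigenvectors with $\varepsilon\ll k^{-3/2}$, where the paper uses $(zI-W)^{-4}$. However, $D\approx\varepsilon I$ needs a local law for $\sum_{\ell\le\varepsilon n}v_\ell(i)^2$, not mere delocalization. The $J$ term is handled directly by $\onevec^\top X\onevec\le n/\min_iD_{ii}$.
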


The idea is to separate objective values: if every representation in at most $r$ dimensions performs worse than one feasible matrix, then every optimum has rank greater than $r$. After centering and scaling the adjacency matrix, we compare both values with the spectral edge $2$. A Gaussian comparison produces a loss of order $1/r$ at bounded rank, and a spectral construction comes closer to $2$. The main work is to keep the errors small enough as $r$ grows.

\begin{proof}[Proof of Theorem~\ref{thm:random}]
Let $A$ be the adjacency matrix and put
\[
 W=\frac{J-I-2A}{\sqrt n},\qquad
 F_n(X)=\langle W,X\rangle-\frac{\onevec^\top X\onevec}{\sqrt n}.
\]
Centering removes the mean edge density: the off-diagonal entries of $W$ are independent symmetric signs divided by $\sqrt n$, and $W_{ii}=0$. The term involving $\onevec$ in $F_n$ accounts for the removed mean. The identity
\[
 \langle A,X\rangle=\tfrac12\bigl(\onevec^\top X\onevec-n-\sqrt n\langle W,X\rangle\bigr)
\]
shows that maximizing $F_n$ on $\Ell_n$ is equivalent to the Max-Cut relaxation. Define
\[
\begin{aligned}
 M_r(W)&=\max\{\langle W,X\rangle:X\in\Ell_n,\ \rank X\le r\},\\
 c_r&=\frac{\mathbb E\|g\|_2}{\sqrt r}
 =\sqrt{\frac2r}\frac{\Gamma((r+1)/2)}{\Gamma(r/2)},\quad g\sim N(0,I_r).
\end{aligned}
\]
The Gaussian norm ratio satisfies $2c_r=2-1/(2r)+O(r^{-2})$. Our two estimates are, with probability tending to one,
\begin{align}
 \begin{split}
 \frac{M_r(W)}n\le 2c_r+C\biggl[&r\sqrt{\frac{\log n}{n}}\\
 &+\frac{(r\log n)^{3/4}}{n^{1/4}}\biggr],
 \end{split}\label{eq:rankbound}\\
 \frac1n\max_{X\in\Ell_n}F_n(X)&\ge2-C_0n^{-1/5}.\label{eq:fullbound}
\end{align}
The constants are absolute, and \eqref{eq:rankbound} holds for deterministic integer sequences $1\le r\le\sqrt n$. The first estimate bounds all rank-$r$ representations; the second supplies a feasible competitor. We establish them in three steps, then compare the errors with the gap $1/r$.

\smallskip
\noindent\emph{1. The low-rank gap for Gaussian entries.}
First replace $W$ by a matrix $B$ from the normalized Gaussian orthogonal ensemble (GOE), whose independent upper-triangular entries are centered Gaussian with variances $1/n$ off the diagonal and $2/n$ on it. For a matrix $U\in\R^{n\times r}$ with unit rows $u_i^\top$, set $H_U=\langle B,UU^\top\rangle$ and $Q_U=U^\top U/n$. The matrix $Q_U$ records how the row vectors are distributed among the $r$ coordinates. It satisfies $Q_U\succeq0$, $\operatorname{tr}Q_U=1$, and
\[
 \mathbb E H_UH_V=\frac2n\|U^\top V\|_F^2.
\]
The quadratic process $H_U$ is easier to bound when compared with a process that is linear in the rows. Fix $Q\succeq0$ of trace one and independent $g_i\sim N(0,I_r)$, and let $Y_U=2\sum_i g_i^\top Q^{1/2}u_i$. Direct expansion gives
\[
\begin{aligned}
 &\mathbb E(H_U-H_V)^2-\mathbb E(Y_U-Y_V)^2\\
 &\quad=\frac2n\bigl(\|U^\top U-nQ\|_F^2+\|V^\top V-nQ\|_F^2\\
 &\hspace{38mm}-2\|U^\top V-nQ\|_F^2\bigr).
\end{aligned}
\]
Thus the difference is at most $4n\delta^2$ whenever $Q_U$ and $Q_V$ lie within Frobenius distance $\delta$ of $Q$.

To use this comparison, we discretize the row vectors and group configurations with nearby matrices $Q_U$. Use $\delta=n^{-3}$ for both discretizations. A $\delta$-net of the unit sphere with at most $(3/\delta)^r$ points gives a row net $\mathcal N$ with $\log|\mathcal N|\le nr\log(3/\delta)$. Cover the trace-one positive semidefinite matrices by at most $K=(1+2/\delta)^{r(r+1)/2}$ Frobenius balls of radius $\delta$, centered in that set, and assign each $U\in\mathcal N$ to one ball according to $Q_U$.

Within a cell centered at $Q$, add to $Y_U$ independent Gaussian variables $Z_U$ of variance $2n\delta^2$. The increments of $Y_U+Z_U$ dominate those of $H_U$, so the Gaussian comparison inequality \cite[Theorem 7.2.11]{Ver18} gives
\[
\begin{aligned}
 \mathbb E\max_{U\text{ in cell}}H_U
 &\le2n\mathbb E\sqrt{g^\top Qg}+2\delta\sqrt{n\log|\mathcal N|}\\
 &\le2nc_r+C\delta n\sqrt{r\log(3/\delta)}.
\end{aligned}
\]
Indeed, $\mathbb E\sqrt{g^\top Qg}$ is symmetric and concave in the eigenvalues of $Q$, whose sum is one; it is maximized at $Q=I/r$. This is where the dimension-dependent gap enters.

Each cell maximum is $\sqrt{2n}$-Lipschitz in the independent standard Gaussian coordinates generating $B$: the squared coefficient norm of $H_U$ is $(2/n)\|UU^\top\|_F^2\le2n$. Write $Z_k$ for the maximum over the $k$-th nonempty cell. Gaussian concentration \cite[Theorem 5.2.2]{Ver18} and a union bound give, for $t\ge0$,
\[
 \mathbb P\!\left\{\max_k(Z_k-\mathbb EZ_k)>t\right\}
 \le\min\{1,K e^{-c_1t^2/n}\}.
\]
Here $c_1>0$ is absolute, and no independence between the cell maxima is required. Integrating this bound, splitting the integral at $\sqrt{(n/c_1)\log(2K)}$, yields
\[
 \mathbb E\max_k Z_k
 \le\max_k\mathbb EZ_k+C\sqrt{n\log(2K)}.
\]
Here $C$ is absolute. Since $\log(2K)\le Cr^2\log n$, the contribution after division by $n$ is at most $Cr\sqrt{\log n/n}$. For any symmetric $T$, approximating each unit row within $\delta$ changes $\langle T,UU^\top\rangle$ by at most $2n\delta\|T\|_{\rm op}$. Since $\mathbb E\|B\|_{\rm op}=O(1)$, we obtain
\begin{equation}\label{eq:gaussian}
 \frac{\mathbb E M_r(B)}n\le2c_r+C r\sqrt{\frac{\log n}{n}}.
\end{equation}
To compare with the zero-diagonal matrix $W$, put $B^\circ=B-\Diag(B_{11},\ldots,B_{nn})$. Every feasible matrix has unit diagonal, so, pointwise,
\[
 M_r(B)=M_r(B^\circ)+\operatorname{tr}B.
\]
As $\mathbb E\operatorname{tr}B=0$, these two maxima have the same expectation. The same identity holds when the maximum is restricted to the row net. Thus only the off-diagonal entries need to be replaced.

\smallskip
\noindent\emph{2. Replacing Gaussian entries by symmetric signs.}
We now transfer the estimate to the random graph. The maximum is not smooth in the matrix entries, so we first replace it by a logarithm of a sum of exponentials. This allows a Taylor expansion as entries are replaced one at a time, following the approach for random semidefinite programs in \cite{MS}. On the same row net, put
\[
\begin{aligned}
 h_U(\xi)&=\frac2{\sqrt n}\sum_{i<j}\xi_{ij}u_i^\top u_j,\\
 f_\beta(\xi)&=\frac1{n\beta}\log\sum_{U\in\mathcal N}e^{\beta h_U(\xi)}.
\end{aligned}
\]
The smooth maximum differs from $n^{-1}\max_U h_U$ by a quantity between zero and
$\log|\mathcal N|/(n\beta)\le Cr\log n/\beta$. For one coordinate, $|\partial_{ij}h_U|\le2/\sqrt n$. The fourth derivative of $f_\beta$ is $\beta^3/n$ times the difference between the fourth centered moment and three times the squared variance of these slopes under the exponential weights. The bound on the slopes therefore gives
\[
 |\partial_{ij}^4f_\beta|\le C\beta^3/n^3.
\]
For completeness, fix one coordinate and condition on all the others in the current mixture of Gaussian and sign entries. Write $\phi(t)$ for the resulting smooth maximum. Taylor's formula gives
\[
 \phi(t)=\sum_{j=0}^3\frac{\phi^{(j)}(0)}{j!}t^j+R(t),
 \qquad |R(t)|\le\frac{C\beta^3}{n^3}|t|^4.
\]
The derivative bound is uniform in $t$ and in the conditioned entries. A standard Gaussian $g$ and a symmetric sign $\varepsilon$ have the same moments through degree three, while $\mathbb Eg^4=3$ and $\mathbb E\varepsilon^4=1$. Their conditional expectations therefore cancel in the polynomial part, leaving
\[
 |\mathbb E_g\phi(g)-\mathbb E_\varepsilon\phi(\varepsilon)|
 \le C\beta^3/n^3.
\]
Taking expectation over the other entries and telescoping over the $\binom n2$ replacements gives total error at most $C\beta^3/n$, with a constant independent of $r$. Adding the smoothing errors and the $O(n^{-3})$ expected net errors gives
\[
 \left|\frac{\mathbb E M_r(W)-\mathbb E M_r(B)}n\right|
 \le C\left(\frac{r\log n}{\beta}+\frac{\beta^3}{n}+n^{-3}\right).
\]
Here both ensembles have bounded expected operator norm. The choice $\beta=(nr\log n)^{1/4}$ balances the smoothing and replacement errors; applying \eqref{eq:gaussian} now gives the desired expectation bound. To pass to a probability bound, observe that changing one sign changes $M_r(W)/n$ by at most $4n^{-3/2}$. The bounded-differences inequality \cite[Theorem 2.9.1]{Ver18} gives
\[
 \mathbb P\{M_r(W)-\mathbb E M_r(W)>nt\}\le e^{-cnt^2}.
\]
Choosing $t=C\sqrt{\log n/n}$ proves \eqref{eq:rankbound}, with failure probability any prescribed negative power of $n$ after increasing $C$. Matching the third moments is essential to this error estimate and is available here because the graph density is $1/2$.

\smallskip
\noindent\emph{3. A feasible matrix near the spectral bound.}
We seek a positive semidefinite matrix concentrated on eigenvectors whose eigenvalues are close to $2$. Its diagonal must also be close to constant, so that enforcing unit diagonal changes the objective only slightly. The local semicircle law provides this control. Set $d=n^{-1/5}$ and $z=2+d$. Spectral-edge rigidity \cite[Theorem 2.9]{BGK} gives $\lambda_{\max}(W)\le2+d/4$ with probability tending to one. On this event let
\begin{align*}
 P & =(zI-W)^{-4},\quad D=\Diag(P_{11},\ldots,P_{nn}),\\
 X & =D^{-1/2}PD^{-1/2}.
\end{align*}
The matrix $P$ assigns weight $(z-\lambda)^{-4}$ to an eigenvector with eigenvalue $\lambda$, favouring those near the upper spectral edge. Diagonal normalization gives $X\succeq0$ and $\diag X=\onevec$. We will check both its objective value and the mean penalty in $F_n$.

Write $Z_n=O_\prec(a_n)$ when, for every fixed $\eta,D_0>0$, $\mathbb P\{|Z_n|>n^\eta a_n\}\le n^{-D_0}$ for all sufficiently large $n$, including maxima over the indicated indices. Put $q(w)=\int(w-x)^{-1}\,d\mu_{\rm sc}(x)$, where $\mu_{\rm sc}$ is the semicircle law, with density $\sqrt{4-x^2}/(2\pi)$ on $[-2,2]$. We justify its application on the contour explicitly. The matrix $W$ satisfies \cite[Definition 2.2]{BGK}: its upper-triangular entries are independent and centered, the off-diagonal variances are $1/n$, and every moment of $\sqrt n W_{ij}$ is uniformly bounded. The variance condition there is $\mathbb E|W_{ij}|^2=n^{-1}(1+O(\delta_{ij}))$, where $\delta_{ij}$ is the Kronecker delta; it permits zero diagonal variance.

Let $\Gamma=\{w:|w-z|=d/4\}$. On its upper semicircle, $\kappa=\Re w-2\in[3d/4,5d/4]$ and $0<\Im w\le d/4$. Taking, for example, $\tau=1/10$, we have $3d/4\ge n^{-2/3+\tau}$ for large $n$. Thus these points belong to the domain $\{E+i\eta:|E|\ge2+n^{-2/3+\tau},\ \eta>0\}$ of \cite[Theorem 10.3]{BGK}. This theorem imposes no positive lower cutoff on $\eta$, so net points may approach the real endpoints arbitrarily closely. Its entrywise estimate, with $\kappa+\Im w\asymp d$, gives
\[
 [(wI-W)^{-1}]_{ii}=q(w)+O_\prec(n^{-1/2}d^{-1/4})
\]
uniformly in the deterministic upper-semicircle point $w$ and in $i$. The source uses $(W-wI)^{-1}$ and the opposite sign for the semicircle transform; changing both signs gives our convention.

To obtain a simultaneous bound, choose a deterministic $n^{-3}$-net of the upper semicircle using points with positive imaginary part, also approximating its two real endpoints. It has at most $Cn^3$ points. A union bound over the net and the $n$ diagonal entries preserves stochastic domination, since the local law permits any prescribed polynomial failure exponent. On the rigidity event above, the closed disk bounded by $\Gamma$ lies at distance at least $d/2$ from the spectrum of $W$. For $R(w)=(wI-W)^{-1}$ and points $w,v$ in this disk, the resolvent identity gives
\[
 \|R(w)-R(v)\|_{\rm op}\le4d^{-2}|w-v|.
\]
The integral defining $q$ gives the analogous bound $|q(w)-q(v)|\le Cd^{-2}|w-v|$. Interpolation from the net therefore costs at most $Cn^{-3}d^{-2}=Cn^{-13/5}$, smaller than $n^{-1/2}d^{-1/4}=n^{-9/20}$. This includes the real endpoints. Finally, $R(\overline w)=\overline{R(w)}$ and $q(\overline w)=\overline{q(w)}$ extend the same bound to the lower semicircle. The estimate consequently holds simultaneously on all of $\Gamma$.

Both resolvents are analytic in a neighborhood of the closed disk on this event. Applying Cauchy's formula to their difference multiplies its contour bound by at most $C_jd^{-(j-1)}$ for the $(j-1)$-st derivative.

Since the $(j-1)$-st derivative of $(zI-W)^{-1}$ is $(-1)^{j-1}(j-1)!(zI-W)^{-j}$, we obtain, for $j=3,4$,
\[
\begin{aligned}
 [(zI-W)^{-j}]_{ii}&=a_j+O_\prec(n^{-1/2}d^{3/4-j}),\\
 a_j&=\int(z-x)^{-j}\,d\mu_{\rm sc}(x)\asymp d^{3/2-j}.
\end{aligned}
\]
The last estimate follows by substituting $2-x=dt$ and using the square-root density at the edge. These formulas serve two purposes: the case $j=4$ controls the diagonal normalization, while $j=3$ controls the objective through $WP=zP-(zI-W)^{-3}$. Both have the same relative error. In particular,
\[
 u:=\max_i|P_{ii}/a_4-1|
 =O_\prec(n^{-1/2}d^{-3/4})\!=O_\prec(n^{-7/20}).
\]
The diagonal normalization is therefore a small perturbation. Let $C_D=(D/a_4)^{-1/2}$. Since $\|C_D-I\|_{\rm op}=O(u)$ and $P\succeq0$, the trace norm satisfies
\[
\begin{aligned}
 \|X-P/a_4\|_{\rm tr}
 &=\|C_D(P/a_4)C_D-P/a_4\|_{\rm tr}\\
 &\le O(u)\operatorname{tr}(P/a_4)=O(nu).
\end{aligned}
\]
Using $WP=zP-(zI-W)^{-3}$ and $\|W\|_{\rm op}=O(1)$ with high probability, we conclude that
\[
 \frac{\langle W,X\rangle}n
 =z-\frac{a_3}{a_4}+O_\prec(n^{-7/20})
 \ge2-Cd+O_\prec(n^{-7/20}).
\]
Indeed, the estimates $a_3\asymp d^{-3/2}$ and $a_4\asymp d^{-5/2}$ give $0<a_3/a_4\le Cd$; together with $z=2+d$, this proves the last inequality. Moreover, $\|X\|_{\rm op}\le\|P\|_{\rm op}/\min_iP_{ii}\le Cd^{-3/2}$, and therefore
\[
 \frac{\onevec^\top X\onevec}{n^{3/2}}
 \le\frac{\|X\|_{\rm op}}{\sqrt n}\le Cn^{-1/5}.
\]
The stochastic error is $o(n^{-1/5})$ even after a sufficiently small fixed power loss. This proves \eqref{eq:fullbound}.

\smallskip
\noindent\emph{Comparing the two values.}
It remains to make the errors smaller than the $1/r$ loss in $2c_r$. Choose
$r=\lfloor c(n/(\log n)^3)^{1/7}\rfloor$. Then
\[
\begin{gathered}
 r^2\sqrt{\frac{\log n}{n}}=o(1),\qquad rn^{-1/5}=o(1),\\
 r\,n^{-1/4}(r\log n)^{3/4}\le c^{7/4}.
\end{gathered}
\]
Take $c>0$ small enough that the constant in \eqref{eq:rankbound} times $c^{7/4}$ is less than $1/8$. Equations \eqref{eq:rankbound}--\eqref{eq:fullbound} then give, with probability tending to one,
\[
\begin{aligned}
 \max_{\substack{X\in\Ell_n\\\rank X\le r}}F_n(X)
 &\le M_r(W)\le n\left(2-\frac1{4r}\right)\\
 &<\max_{X\in\Ell_n}F_n(X).
\end{aligned}
\]
Every optimizer has rank greater than $r$, proving the theorem.
\end{proof}

The exponent $1/7$ comes from balancing the comparison errors against the $1/r$ gap; it need not describe the actual growth rate.

There is a more specific prediction suggested by the Gaussian infinite-component spin glass. Hastings~\cite{Hastings00} derived a ground-state dimension of order $n^{2/5}$. Aspelmeier and Moore~\cite{AM04} supported this scaling numerically and gave an alternative argument balancing a finite-dimension energy correction of order $1/r$ against a spectral correction of order $(r/n)^{2/3}$. Balancing these two terms gives $r$ of order $n^{2/5}$. Motivated by this prediction, we propose the following statement for unweighted random graphs.

\begin{conjecture}\label{conj:typical}
If $G\sim G(n,\frac12)$, then
\[
 \frac{\log\rho(G)}{\log n}\xrightarrow{\mathbb P}\frac25.
\]
\end{conjecture}

The spin-glass calculations motivate this conjecture but do not establish it for the graph model. In particular, agreement of leading optimal values does not imply agreement of exact optimal ranks. Theorem~\ref{thm:random} supplies a polynomial lower bound; even an upper bound $\rho(G)=o(\sqrt n)$ with probability tending to one remains open here.

\bibliographystyle{amsplain}
\bibliography{kneser}
\end{document}